\let\origsection=\section \def\section{\@ifstar{\origsection*}{\mysection}}
\def\mysection{\@startsection{section}{1}\z@{.7\linespacing\@plus\linespacing}{.5\linespacing}{\normalfont\scshape\centering\S}}
\renewcommand{\PrintDOI}[1]{\doi{#1}}
\numberwithin{equation}{section}
\numberwithin{figure}{section}
\let\polishlcross=\l
\def\l{\ifmmode\ell\else\polishlcross\fi}
\let\emptyset=\varnothing
\let\setminus=\smallsetminus
\def\moverlay{\mathpalette\mov@rlay}
\def\mov@rlay#1#2{\leavevmode\vtop{   \baselineskip\z@skip \lineskiplimit-\maxdimen
   \ialign{\hfil$\m@th#1##$\hfil\cr#2\crcr}}}
\newcommand{\charfusion}[3][\mathord]{
    #1{\ifx#1\mathop\vphantom{#2}\fi
        \mathpalette\mov@rlay{#2\cr#3}
      }
    \ifx#1\mathop\expandafter\displaylimits\fi}
\DeclareFontFamily{U}  {MnSymbolC}{}
\DeclareSymbolFont{MnSyC}         {U}  {MnSymbolC}{m}{n}
\DeclareFontShape{U}{MnSymbolC}{m}{n}{
    <-6>  MnSymbolC5
   <6-7>  MnSymbolC6
   <7-8>  MnSymbolC7
   <8-9>  MnSymbolC8
   <9-10> MnSymbolC9
  <10-12> MnSymbolC10
  <12->   MnSymbolC12}{}
\DeclareMathSymbol{\powerset}{\mathord}{MnSyC}{180}
\newcommand{\qedge}[7]{

	\ifx\relax#4\relax
		\def\qoffs{0pt}
	\else
		\def\qoffs{#4}
	\fi

	\def\qhedge{
		($#1+#3!\qoffs!-90:#2-#3$) --
		($#2+#1!\qoffs!-90:#3-#1$) --
		($#3+#2!\qoffs!-90:#1-#2$) -- cycle}

	\coordinate (12) at ($#1!\qoffs!90:#2$);
	\coordinate (13) at ($#1!\qoffs!-90:#3$);
	\coordinate (23) at ($#2!\qoffs!90:#3$);
	\coordinate (21) at ($#2!\qoffs!-90:#1$);
	\coordinate (31) at ($#3!\qoffs!90:#1$);
	\coordinate (32) at ($#3!\qoffs!-90:#2$);
	
	\def\nqhedge{
		(13) let \p1=($(13)-#1$), \p2=($(12)-#1$) in
			arc[start angle={atan2(\y1,\x1)}, delta angle={atan2(\y2,\x2)-atan2(\y1,\x1)-360*(atan2(\y2,\x2)-atan2(\y1,\x1)>0)}, x radius=\qoffs, y radius=\qoffs] --
		(21) let \p1=($(21)-#2$), \p2=($(23)-#2$) in
			arc[start angle={atan2(\y1,\x1)}, delta angle={atan2(\y2,\x2)-atan2(\y1,\x1)-360*(atan2(\y2,\x2)-atan2(\y1,\x1)>0)}, x radius=\qoffs, y radius=\qoffs] --
		(32) let \p1=($(32)-#3$), \p2=($(31)-#3$) in
			arc[start angle={atan2(\y1,\x1)}, delta angle={atan2(\y2,\x2)-atan2(\y1,\x1)-360*(atan2(\y2,\x2)-atan2(\y1,\x1)>0)}, x radius=\qoffs, y radius=\qoffs] --
		cycle}

		\ifx\relax#5\relax
		\def\qlwidth{1pt}
	\else
		\def\qlwidth{#5}
	\fi
	
		\ifx\relax#7\relax
		\fill \nqhedge;
	\else
		\fill[#7]\nqhedge;
	\fi

		\ifx\relax#6\relax
		\draw[dotted, line width=\qlwidth,rounded corners=\qoffs]\nqhedge;
	\else
		\draw[dotted, line width=\qlwidth,#6]\nqhedge;
	\fi
}
\let\epsilon=\varepsilon
\let\rho=\varrho
\let\theta=\vartheta
\newtheoremstyle{note}  {4pt}  {4pt}  {\sl}  {}  {\bfseries}  {.}  {.5em}          {}
\newtheoremstyle{introthms}  {3pt}  {3pt}  {\itshape}  {}  {\bfseries}  {.}  {.5em}          {\thmnote{#3}}
\newtheoremstyle{remark}  {2pt}  {2pt}  {\rm}  {}  {\bfseries}  {.}  {.3em}          {}
\theoremstyle{plain}
\newtheorem{theorem}{Theorem}[section]
\theoremstyle{note}
\theoremstyle{remark}
\newtheorem{remark}[theorem]{Remark}
\newcommand*\patchAmsMathEnvironmentForLineno[1]{
\expandafter\let\csname old#1\expandafter\endcsname\csname #1\endcsname
\expandafter\let\csname old#1\expandafter\endcsname\csname end#1\endcsname
\renewenvironment{#1}
{\linenomath\csname old#1\endcsname}
{\csname oldend#1\endcsname\endlinenomath}}
\begin{document}

\title[Short proof that Kneser graphs are Hamiltonian for~$n\geq 4k$]{Short proof that Kneser graphs are Hamiltonian for~$n\geq 4k$}
\author[J. Bellmann]{Johann Bellmann}
\address{Fachbereich Mathematik, Universit\"{a}t Hamburg, Hamburg, Germany}
\email{johann.bellmann@studium.uni-hamburg.de}
\author[B.~Sch\"{u}lke]{Bjarne Sch\"{u}lke}
\address{Fachbereich Mathematik, Universit\"{a}t Hamburg, Hamburg, Germany}
\email{bjarne.schuelke@uni-hamburg.de}
\thanks{The second author's research was supported by G.I.F. Grant Agreements No. I-1358-304.6/2016.}

\keywords{Kneser graph, Hamiltonian cycle}

\begin{abstract}
For integers~$n\geq k\geq 1$, the Kneser graph~$K(n,k)$ is the graph with vertex set~$V=[n]^{(k)}$ and edge set~$E=\{\{x,y\} \in V^{(2)}: x\cap y=\emptyset\}$. Chen proved that for~$n\geq 3k$, Kneser graphs are Hamiltonian and later improved this to~$n\geq 2.62k+1$. Furthermore, Chen and F\"uredi gave a short proof that if~$k | n$, Kneser graphs are Hamiltonian for~$n\geq 3k$. In this note, we present a short proof that does not need the divisibility condition, i.e., we give a short proof that~$K(n,k)$ is Hamiltonian for~$n\geq 4k$.
\end{abstract}

\maketitle
\section{Introduction}
Throughout the paper, let~$n\geq k \geq 1$ be integers and set~$[n]=\{1,\dots,n\}$. For a set~$A$, define~$A^{(k)}$ to be the set of all~$k$-element subsets (or~$k$-subsets) of~$A$. The \textit{Kneser graph}~$K(n,k)$ has vertex set~$[n]^{(k)}$  and two vertices form an edge if and only if they are disjoint (as subsets of~$[n]$). With rather involved proofs Chen~\cite{chenfirst} showed that Kneser graphs with~$n$ linear in~$k$, and even triangle-free Kneser graphs~\cite{chenbest} contain Hamiltonian cycles. More precisely, in~\cite{chenbest} she showed the following.

\begin{theorem}\label{thm:chen}
If~$n \geq 2.62k+1$, then~$K(n,k)$ is Hamiltonian.
\end{theorem}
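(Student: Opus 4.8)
The plan is to argue by induction that peels off the ground element $n$. Fix $n$ and split the vertex set $[n]^{(k)}$ into the family $\cA$ of $k$-sets avoiding $n$ and the family $\cB$ of $k$-sets containing $n$. The family $\cA$ is precisely the vertex set of $K(n-1,k)$, so by induction it carries a Hamiltonian cycle $C_\cA$. The crucial structural observation is that $\cB$ is an \emph{independent} set in $K(n,k)$: any two of its members share the element $n$ and hence are never disjoint. Consequently, in any Hamiltonian cycle each vertex of $\cB$ must be flanked by two vertices of $\cA$, and the whole problem reduces to \emph{absorbing} the $\binom{n-1}{k-1}$ vertices of $\cB$ into the cycle $C_\cA$.

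I would carry out the absorption by edge subdivision. A vertex $B\cup\{n\}\in\cB$, with $B\in[n-1]^{(k-1)}$, can replace an edge $AA'$ of $C_\cA$ — turning it into the path $A\,(B\cup\{n\})\,A'$ — precisely when $B\cap A=B\cap A'=\emptyset$. Performing such replacements along \emph{distinct} edges of $C_\cA$, one per vertex of $\cB$, produces a single Hamiltonian cycle of $K(n,k)$. Existence of a suitable assignment is a system-of-distinct-representatives problem: one seeks an injection $\psi$ from $\cB$ into the edge set of $C_\cA$ with $\psi(B\cup\{n\})=AA'$ always satisfying $B\cap(A\cup A')=\emptyset$. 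By Hall's theorem this succeeds provided that, for every subfamily $\cF\subseteq\cB$, the number of edges of $C_\cA$ compatible with some member of $\cF$ is at least $|\cF|$.

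The main obstacle is that this simple absorption reaches only $n=3k$. Indeed, adjacent vertices $A,A'$ of $C_\cA$ are disjoint $k$-sets, so their union is a fixed $2k$-subset of $[n-1]$, and $B\cup\{n\}$ is compatible with the edge $AA'$ only when this $2k$-set avoids the $(k-1)$-set $B$; a compatible edge can exist at all only when $n-k\geq 2k$. For $n\geq 3k$ the Hall condition is then comfortably met, recovering Chen's earlier $n\geq 3k$ theorem (and, in an even simpler form, the $n\geq 4k$ bound of the present note). The entire difficulty of the theorem lies in the narrow window $2.62k+1\leq n<3k$, where no $\cB$-vertex can be absorbed into a \emph{single} edge and crude subdivision breaks down completely. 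Here one must abandon the one-edge-per-vertex scheme for a more global construction — rerouting longer segments of $C_\cA$ and balancing the two subfamilies against each other across all levels of the recursion so that the slack needed for absorption accumulates rather than being paid afresh at each step. The constant $2.62\approx\phi^2$, with $\phi=(1+\sqrt5)/2$, is the fingerprint of this balancing: the admissible values of $n$ at successive levels obey a Fibonacci-type recurrence with growth rate $\phi^2$. Controlling that recurrence uniformly down to the very boundary $n\approx2.62k$, where compatible configurations are scarcest, is where essentially all the work resides, and is the reason Chen's proof is, as remarked above, rather involved.
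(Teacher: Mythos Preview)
First, note that the paper does \emph{not} prove Theorem~\ref{thm:chen}: it is quoted as Chen's result from~\cite{chenbest}, and the paper's own contribution is the weaker Theorem~\ref{thm:main} ($n\geq 4k$) with a short proof via Baranyai's theorem and Gray codes. So there is no ``paper's own proof'' of this statement to compare against.

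As for your proposal, it is not a proof but a narrative that explicitly stops short of the theorem. Your induction-and-absorption scheme, as you yourself observe, only reaches $n\geq 3k$; for the stated range $2.62k+1\leq n<3k$ you offer no argument, only the remark that one ``must abandon the one-edge-per-vertex scheme'' and some speculation about Fibonacci-type recurrences and the appearance of $\phi^2$. That is precisely the content of the theorem, and it is entirely missing. Moreover, even the inductive frame is broken at the boundary: if $n=\lceil 2.62k+1\rceil$, then $n-1$ need not satisfy the hypothesis, so you cannot invoke the induction hypothesis to get a Hamiltonian cycle in $K(n-1,k)$ without a separate base case that already sits in the hard range. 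In short, the proposal identifies where the difficulty lies but does not address it; what you have written would at best be an alternative route to the $n\geq 3k$ result of~\cite{chenfirst}, not to Theorem~\ref{thm:chen}.
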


Chen and F\"uredi~\cite{chenfuredi} simplified the proof for the case when~$k | n$ and~$n\geq 3k$. Katona~\cite{katona} conjectured that, apart from finitely many exceptions,~$K(n,k)$ is Hamiltonian if~$n\geq 2k+1$. Recently M\"utze, Nummenpalo, and Walczak~\cite{MuNuWa} showed that for~$k\geq 3$, the Kneser graph~$K(2k+1,k)$ is Hamiltonian (and they also provide a more exhaustive coverage of the previous work in this area).

In this note, we elaborate the short proof due to Chen and F\"uredi to work for the general case by removing the divisibility condition. More precisely, we give a short proof that 

\begin{theorem}\label{thm:main}
$K(n,k)$ is Hamiltonian for~$n\geq 4k$.
\end{theorem}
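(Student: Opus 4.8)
The plan is to adapt the block-decomposition argument of Chen and F\"uredi and to treat separately the vertices that their divisibility hypothesis removed. Write $n = qk + r$ with $0 \le r < k$; then $n \ge 4k$ is equivalent to $q \ge 4$, so relative to the case $k \mid n$ (where $q \ge 3$ suffices) I have one extra block of length $k$ together with a short remainder segment of length $r$ to absorb. I would identify $[n]$ with $\ZZ_n$, partition it into $q$ consecutive blocks $B_1, \dots, B_q$ of length $k$ and a remainder segment $R$ of length $r$, and keep the rotation $\sigma \colon A \mapsto A+1$ in mind as an automorphism of $K(n,k)$ that cuts down the number of cases one must inspect.

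The organizing principle is a concatenation lemma: if $V = [n]^{(k)}$ is partitioned into families $\cF_1, \dots, \cF_m$ such that each induced subgraph on $\cF_i$ has a Hamilton path $P_i$ with endpoints $s_i, t_i$, and $t_i \cap s_{i+1} = \emptyset$ for every $i$ modulo $m$, then the paths $P_i$ joined by the edges $t_i s_{i+1}$ form a Hamilton cycle of $K(n,k)$. I would index the families by the intersection profile $(|A \cap B_1|, \dots, |A \cap B_q|)$, ordered so that consecutive families differ by moving a single element between two blocks. Since the blocks partition $[n]$, two sets are disjoint precisely when they are disjoint block by block; hence inside a family with a spread-out profile the induced subgraph is a product of small Kneser graphs $K(k, a_j)$ and complete graphs, on which a Hamilton path with prescribed endpoints can be built by induction on $k$, exactly as in the divisible case, while the single-element moves between blocks supply the disjoint linking edges $t_i s_{i+1}$ passing from one profile to the next.

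The genuinely new difficulty, and the step I expect to dominate the proof, is absorbing the vertices excluded by the condition $k \mid n$: the sets meeting the remainder segment $R$, and the profiles concentrated on too few blocks, for which $|A \cap B_j| > k/2$ forces overlaps, so the profile family is an independent set and cannot carry a Hamilton path on its own. I would fold these into a small number of exceptional families and route them through the fourth block, which the main construction can be arranged to leave free: its $k$ unused coordinates are exactly what guarantee that the exceptional endpoints have disjoint neighbours and that a Hamilton path on each exceptional family exists. This is precisely the point where $q \ge 4$ rather than $q \ge 3$ is needed. Checking simultaneously that every family, regular and exceptional, admits the required Hamilton path and that all $m$ linking edges can be realized by disjoint pairs is the delicate bookkeeping at the heart of the argument, and the rotation symmetry $\sigma$ should collapse many of these verifications to a single representative case.
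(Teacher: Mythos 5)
You have identified a real organizing principle (the concatenation lemma is sound), but the hypothesis it needs---that each profile family spans a Hamilton path between prescribed endpoints---is exactly what remains unproven in your sketch, and as stated it is false. Beyond the concentrated profiles you flag (some $a_j>k/2$), any profile with a coordinate $a_j=k/2$ (for $k$ even) degenerates: in that coordinate every $a_j$-set has a unique disjoint partner, so the family with profile $(k/2,k/2,0,\dots,0)$ induces a $1$-regular graph, a perfect matching on $\binom{k}{k/2}^2\geq 4$ vertices, which carries no Hamilton path. More importantly, your appeal to building these paths ``by induction on $k$, exactly as in the divisible case'' has no referent: the Chen--F\"uredi proof for $k\mid n$ is not a block/profile decomposition at all. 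It partitions $[n]^{(k)}$ into cliques of $K(n,k)$ (perfect matchings of the complete $k$-uniform hypergraph) via Baranyai's theorem and strings the cliques together along a Gray code; it never constructs Hamilton paths inside tensor products of small Kneser graphs. Hamilton paths with prescribed endpoints in such products are not a known subroutine one can cite---establishing anything of that kind is the sort of work that makes Chen's general proofs long and involved.

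The absorption step is likewise a restatement of the difficulty rather than a solution. The sets meeting the remainder segment $R$ number $\binom{n}{k}-\binom{n-r}{k}$, a positive proportion of all vertices, and together with the independent or near-independent profile families they cannot be processed by your concatenation lemma at all: an independent family of size greater than one has no Hamilton path, so ``routing it through the fourth block'' would force you to interleave its vertices one at a time with vertices of other families---a structurally different construction that your sketch does not specify, and where the condition $q\geq 4$ would actually have to be used. For comparison, the paper sidesteps every one of these issues: Theorem~\ref{thm:Baranyai} gives a partition of $K(n,k)$ into cliques each of size at least $4$, except possibly one deficient clique of size $q\leq 3$ whose vertices avoid the element $n$; every vertex containing $n$ is automatically the marking vertex of its clique, a Gray code on $[n-1]^{(k-1)}$ orders the marking vertices so that consecutive ones differ in at most two elements (whence each clique of size $\geq 4$ contains a vertex disjoint from the next marking vertex), and the at most three uncovered vertices are inserted individually between two consecutive vertices $u_i^1,u_i^2$ of a suitable clique, both disjoint from the inserted vertex. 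That insertion trick is the entire content of removing the divisibility hypothesis, and your proposal has no working counterpart to it.
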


A \textit{Gray-Code} is an enumeration~$x_1\dots x_m$ of all sets in~$[n]^{(k)}$ such that each two consecutive sets and in addition~$x_m$,~$x_1$ differ by exactly one element (of~$[n]$; in general, we say two~$k$-sets~$x,y\in[n]^{(k)}$ \textit{differ by}~$i$ \textit{elements} if~$\vert x\setminus y\vert=i$).
The existence of Gray-Codes follows easily by induction (see e.g.,~\cite{gray}) and they were also used in~\cite{chenfuredi}. Observing that the edges of~$K_n^{(k)}$ correspond to the vertices of~$K(n,k)$ and a matching of size~$s$ in~$K_n^{(k)}$ corresponds to a clique of size~$s$ in~$K(n,k)$, we get the following corollary to Baranyai's theorem~\cite{Baranyai}.
\begin{theorem}\label{thm:Baranyai}
Let~$n\geq k$ and~$a_1,\dots,a_t\leq \frac{n}{k}$ be integers such that~$\sum_{i=1}^t a_i=\binom{n}{k}$. Then~$K(n,k)$ can be partitioned into cliques~$A_i$ with~$\vert A_i\vert=a_i$ for~$i\in[t]$.
\end{theorem}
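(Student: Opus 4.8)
The plan is to read this off directly from the general form of Baranyai's theorem, using the dictionary indicated just before the statement. Recall that this general form asserts the following: if $b_1,\dots,b_t$ are positive integers with $\sum_{i=1}^t b_i=\binom{n}{k}$ and $b_i\leq\lfloor n/k\rfloor$ for every~$i$, then the set~$[n]^{(k)}$ of all $k$-subsets of~$[n]$ admits a partition into classes $B_1,\dots,B_t$ with $\vert B_i\vert=b_i$ such that the members of each~$B_i$ are pairwise disjoint. (The bound $\lfloor n/k\rfloor$ is the largest possible size of such a class, since $s$ pairwise disjoint $k$-sets occupy $sk\leq n$ elements of~$[n]$.)

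First I would make the correspondence between $K(n,k)$ and the complete $k$-uniform hypergraph~$K_n^{(k)}$ precise. The vertices of $K(n,k)$ are exactly the $k$-subsets of~$[n]$, i.e.\ the hyperedges of~$K_n^{(k)}$, and by definition two such vertices are adjacent in $K(n,k)$ precisely when the corresponding $k$-sets are disjoint. Consequently a set of vertices of $K(n,k)$ is a clique if and only if the corresponding $k$-sets are pairwise disjoint, that is, form a matching in~$K_n^{(k)}$. In particular, partitions of $[n]^{(k)}$ into pairwise-disjoint classes correspond bijectively to partitions of the vertex set of $K(n,k)$ into cliques, with matching part sizes.

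It then remains to match up the hypotheses and apply the theorem. Since each~$a_i$ is an integer, the assumption $a_i\leq n/k$ is equivalent to $a_i\leq\lfloor n/k\rfloor$, which is exactly the bound required by Baranyai's theorem. Applying that theorem with $b_i=a_i$ yields a partition of $[n]^{(k)}$ into matchings of sizes $a_1,\dots,a_t$, and translating back through the dictionary gives the desired partition of $K(n,k)$ into cliques~$A_i$ with $\vert A_i\vert=a_i$. I expect no genuine obstacle here beyond invoking the correct (general) form of Baranyai's theorem rather than the more familiar $k\mid n$ version; the one point demanding care is simply that the prescribed clique sizes never exceed the matching number $\lfloor n/k\rfloor$ of~$K_n^{(k)}$, which is precisely what the hypothesis $a_i\leq n/k$ guarantees.
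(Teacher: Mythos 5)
Your proposal is correct and is essentially identical to the paper's own treatment: the paper derives this statement as a corollary of Baranyai's theorem via precisely the same dictionary (edges of~$K_n^{(k)}$ correspond to vertices of~$K(n,k)$, matchings to cliques), stated in the sentence immediately preceding the theorem. Your write-up merely makes that one-line observation explicit, including the harmless point that $a_i\leq n/k$ for integers is the same as $a_i\leq\lfloor n/k\rfloor$.
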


\section{Short proof of Theorem~\ref{thm:main}}\label{sec:cycle}

For clarity, we first give the proof for~$n\geq 5k$ and afterwards, in Remark~\ref{rem:small n}, we will go through the proof again inserting the additional arguments for~$n\geq 4k$.

\begin{proof}[Proof of Theorem~\ref{thm:main} if~$n\geq 5k$]
Let~$p,r,m,q\in\mathds{Z}$ such that~$n=pk+r$ with~$0 \leq r\leq k-1$ and~$\binom{n}{k}=(m-1)p+q$ with~$1\leq q \leq p$. Set~$b=b(q)=\max\{4-q,0\}$ and define
\begin{align}\label{eq: ai}
    a_i=\begin{cases}
        p &\text{ for }i\in[m-1-b] \\
        p-1 &\text{ for }i\in \{m-b,\dots ,m-1\} \\
        q+b &\text{ for }i=m
    \end{cases}
    .
\end{align}

Then we have~$\sum_{i=1}^m a_i=\binom{n}{k}$ and Theorem~\ref{thm:Baranyai} provides a partition of~$K(n,k)$ into cliques~$A_1,\dots,A_m$ with~$\vert A_i\vert = a_i\geq 4$ for all~$i\in [m]$. For~$i\in [m]$, define \textit{marking vertices} as follows.
If there is an~$x_i\in A_i$ with~$n\in x_i$, set~$x_i$ to be the \textit{marking vertex} of~$A_i$. If there is no vertex containing~$n$ in~$A_i$, we choose an arbitrary vertex~$x_i\in A_i$ as \textit{marking vertex} of~$A_i$. We call the set of marking vertices~$M$ and note that~$M$ contains~$M'=\{z\cup \{n\}: z\in [n-1]^{(k-1)}\}$. Next, we use a Gray-Code on~$[n-1]^{(k-1)}$ to obtain an enumeration~$x'_1\dots x'_{m'}$ of~$M'\subseteq M$ with~$\vert x'_i\setminus x'_{i+1}\vert =1$ for all~$i\in\mathds{Z}/m'\mathds{Z}$. Further, we consider a map~$\varphi : M\setminus M'\to M'$, so that for each~$x\in M\setminus M'$, we have~$\vert \varphi(x)\setminus x\vert=1$ (this is possible since for all~$a\in x\in M\setminus M'$, the vertex~$x\setminus\{a\}\cup\{n\}$ is in~$M'$). Thus, the enumeration $$x'_1\varphi^{-1}(x'_1)x'_2\varphi^{-1}(x'_2)\dots x'_{m'}\varphi^{-1}(x'_{m'})=y_1\dots y_{m}$$ 
of~$M$ (here $\varphi^{-1}(x_i')$ stands for an arbitrary enumeration of~$\varphi^{-1}(x_i')$) has the property that~$y_i$ and~$y_{i+1}$ differ by at most two elements (for~$i\in \mathds{Z}/m\mathds{Z}$). 
Since~$\vert A_i\vert\geq 4$, this yields that there is a vertex~$z_i\in A(y_i)$ which is disjoint to~$y_{i+1}$, where~$A(y_i)$ is the clique among~$A_1,\dots ,A_m$ that contains~$y_i$. Thus, denoting by~$\alpha_i$ an enumeration of~$A(y_i)$ that starts with~$y_i$ and ends with~$z_i$, we get that~$\alpha_1 \dots \alpha_m$ is a Hamiltonian cycle.

\end{proof}

\begin{remark}\label{rem:small n}
Here we mention the modifications that let the proof above work for all~$n\geq 4k$. Note that for~$k=1$ the result is trivial, so assume~$k\geq 2$. First, using~$0$ instead of~$b$ in~(\ref{eq: ai}) yields that~$\vert A_i\vert\geq 4$ for~$i\in [m-1]$ and~$\vert A_m\vert = q$. If~$q\geq 4$, the same proof as above still works. So we can assume that~$q\leq 3$ and hence, there is an element of~$[n]$, w.l.o.g.~$n$, that is not contained in any vertex of~$A_m$. For~$i\in [m-1]$, we define marking vertices as before and for~$A_{m}$, we do not define a marking vertex.
Proceeding as above gives an enumeration~$y_1\dots y_{m-1}$ of the marking vertices with the property that~$y_i$ and~$y_{i+1}$ differ by at most two elements (for~$i\in \mathds{Z}/(m-1)\mathds{Z}$) and so we still know that the vertices~$z_i$ exist as before (for~$i\in [m-1]$).

Thus, we get as above that~$\alpha_1 \dots \alpha_{m-1}$ is a cycle~$C$ which covers all but at most three vertices~$v_1,v_2,v_3 \in A_m$. Note that for each~$v_i$, we can choose a marking vertex~$y_{j(i)}\in M' $ with~$\vert v_i\setminus y_{j(i)}\vert=1$. Since~$k\geq 2$ and~$A_m$ is a clique,~$y_{j(i)}\neq y_{j(i')}$ whenever~$v_i\neq v_{i'}$.
Further,~$\vert A(y_{j(i)})\vert\geq 4$ implies that there are two vertices~$u_i^1,u_i^2\in A(y_{j(i)})$ which are disjoint to~$v_i$. Note, that the enumeration~$\alpha_{j(i)}$ of~$A(y_{j(i)})$ was arbitrary apart from the start~($y_{j(i)}$) and the end~($z_i$). So we can additionally request that~$u_i^1$ and~$u_i^2$ are next to each other in this enumeration and insert~$v_i$ into~$C$ between~$u_i^1$ and~$u_i^2$, obtaining a Hamiltonian cycle.
\end{remark}

\begin{bibdiv}
\begin{biblist}
	
\bib{Baranyai}{article} {
author = {Baranyai, Zs.},
year = {1975},
month = {01},
pages = {},
title = {On the Factorization of the Complete Uniform Hypergraph},
volume = {10},
journal = {Coll. Math. Soc. Janos Bolyai}
}
\bib{chenfirst}{article}{
	abstract = {The Kneser graph K(n, k) has as vertices the k-subsets of {1, 2, …, n}. Two vertices are adjacent if the k-subsets are disjoint. In this paper, we prove that K(n, k) is Hamiltonian for n⩾3k, and extend this to the bipartite Kneser graphs.},
	author = {Chen, Y.-C.},
	doi = {10.1006/jctb.2000.1969},
	issn = {0095-8956},
	journal = {Journal of Combinatorial Theory, Series B},
	keywords = {Hamiltonian cycles, uniform subset graphs, antipodal layers problem, Kneser graphs, Gray codes, Erdo\&\#x030B; s revolving door problem},
	number = {1},
	pages = {69--79},
	title = {Kneser Graphs Are Hamiltonian For $n\geq 3k$}
	url = {http://www.sciencedirect.com/science/article/pii/S0095895600919695},
	volume = {80},
	year = {2000}
}

\bib{chenbest}{article}{
	abstract = {The Kneser graph K(n,k) has as vertices the k-subsets of {1,2,…,n}. Two vertices are adjacent if the k-sets are disjoint. When n<3k, the Kneser Graph K(n,k) has no triangle. In this paper, we prove that K(n,k) is Hamiltonian for n⩾(3k+1+5k2−2k+1)/2, and extend this to the bipartite Kneser graphs. Note that (3k+1+5k2−2k+1)/2<2.62k+1.},
	author = {Chen, Y.-C.},
	doi = {10.1016/S0095-8956(03)00040-6},
	issn = {0095-8956},
	journal = {Journal of Combinatorial Theory, Series B},
	keywords = {Hamiltonian cycles, Uniform subset graphs, Antipodal layers problem, Kneser graphs},
	number = {1},
	pages = {1--16},
	title = {Triangle-free Hamiltonian Kneser graphs}
	url = {http://www.sciencedirect.com/science/article/pii/S0095895603000406},
	volume = {89},
	year = {2003}
}

\bib{chenfuredi}{article} {
author = {Chen, Y.-C.},
author={F\"{u}redi, Z.}
year = {2002},
pages = {147-149},
title = {Hamiltonian Kneser Graphs},
volume ={22}
journal = {Combinatorica}
DOI ={https://doi.org/10.1007/s004930200007}
}

\bib{katona}{article}{
	abstract = {Demetrovics et al [Design type problems motivated by database theory, J. Statist. Plann. Inference 72 (1998) 149–164] constructed a decomposition of the family of all k-element subsets of an n-element set into disjoint pairs (A,B)(A∩B=∅,|A|=|B|=k) where two such pairs are relatively far from each other in some sense. The paper invented a proof method using a Hamiltonian-type theorem. The present paper gives a generalization of this tool, hopefully extending the power of the method. Problems where the method could be also used are shown. Moreover, open problems are listed which are related to the Hamiltonian theory. In these problems a cyclic permutation is to be found when certain restrictions are given by a family of k-element subsets.},
	author = {Katona, Gyula O.H.},
	doi = {10.1016/j.disc.2005.03.029},
	issn = {0012-365X},
	journal = {Discrete Mathematics},
	keywords = {Families of subsets, Design, Hamiltonian cycle, Baranyai's theorem},
	number = {1},
	pages = {87--103},
	title = {Constructions via Hamiltonian Theorems},
	url = {http://www.sciencedirect.com/science/article/pii/S0012365X05004152},
	volume = {303},
	year = {2005}
}

\bib{MuNuWa}{incollection} {
    AUTHOR = {M\"{u}tze, Torsten},
    Author = {Nummenpalo, Jerri},
    author = {Walczak, Bartosz},
     TITLE = {Sparse {K}neser graphs are {H}amiltonian},
 BOOKTITLE = {S{TOC}'18---{P}roceedings of the 50th {A}nnual {ACM} {SIGACT}
              {S}ymposium on {T}heory of {C}omputing},
     PAGES = {912--919},
 PUBLISHER = {ACM, New York},
      YEAR = {2018},
   MRCLASS = {05C45},
  MRNUMBER = {3826304},
}
\bib{gray}{book}
{
 author = {Nijenhuis, Albert},
 author = {Wilf, Herbert S.}
 title = {Combinatorial Algorithms: For Computers and Hard Calculators},
 year = {1978},
 isbn = {0125192606},
 edition = {2nd},
 publisher = {Academic Press, Inc.},
 address = {Orlando, FL, USA},
}

\end{biblist}
\end{bibdiv}

\end{document}